\theoremstyle{plain}
\newtheorem{theorem}{Theorem}
\newtheorem{lemma}[theorem]{Lemma}
\newtheorem{corollary}[theorem]{Corollary}
\newtheorem{proposition}[theorem]{Proposition}
\theoremstyle{definition}
\newtheorem{definition}[theorem]{Definition}
\newtheorem*{theorem*}{Theorem}
\newtheorem*{corollary*}{Corollary}
\newtheorem{example}[theorem]{Example}
\theoremstyle{remark}
\newtheorem{remark}[theorem]{Remark}
\newcommand{\Q}{\mathbb{Q}}
\begin{document}

\title[Nil-Essential Ideals]{Nil-Essential Ideals}
\author[Nongsiej, Buhphang]{Raplang Nongsiej, Ardeline Mary Buhphang}
\address{
Department of Mathematics, North Eastern Hill University, Shillong, India\\ Corresponding author: Ardeline Mary Buhphang}
\email{ardeline@nehu.ac.in, rap890002@gmail.com}

% AMS 2010 subject classification  %%%%%%%%%%%%%%%%%%%%%%%%%%%

\subjclass[2020]{16D10, 16D40, 16D60}

%%%%%%%%%%%%%%%%%%%%%%%%%%%%%%%%%%%%%%%%%%%%%%%%%%%%%%%%%%

% type key words below (separated by comma) %%%%%%%%%%%%%%

\keywords{essential ideal, nil-essential ideal, nil-essential monomorphism, noetherian ring, semi-simple}

%%%%%%%%%%%%%%%%%%%%%%%%%%%%%%%%%%%%%%%%%%%%%%%%%%%%%%%%%%

% type your dedicatory below (optional) %%%%%%%%%%%%%%%%%%

%\dedicatory{galley of proofs}

%%%%%%%%%%%%%%%%%%%%%%%%%%%%%%%%%%%%%%%%%%%%%%%%%%%%%%%%%%

% YOUR PAPER STARTS HERE  %%%%%%%%%%%%%%%%%%%%%%%%%%%%%%%%

%%%%%%%%%%%%%%%%%%%%%%%%%%%%%%%%%%%%%%%%%%%%%%%%%%%%%%%%%%

\begin{abstract}
The class of nil-essential ideals is a generalisation of the class of essential ideals. Every nil-essential ideal of a reduced ring is essential. Therefore the intersection of all nil-essential ideals over a reduced ring $R$ is the socle of $R$. In this note, we apply this generalisation to give a new criteria of semisimplicity in terms of nil-essentiality of ideals. 
\end{abstract}

\maketitle

\section{Introduction}

Throughout, we assume a ring to be an associative ring with identity, unless mentioned otherwise. Essential submodules were first studied by Johnson \cite{john} in the year 1951 and its nomenclature is credited to Eckmann and Schopf \cite{eck}. Essential ideals play a very important role in the field of commutative and noncommutative algebras. For example, $socle(_RR)=\bigcap \left\{I\mid I \text{ is essential left ideal}\right\}$. If $I$ is a left(resp. right) ideal of a ring $R$ and $J\leq I$ be another left(resp. right) ideal then $J$ is said to be \emph{essential(or large)} \cite{john} in $I$, written as $J\unlhd I$ \cite[p.~72]{ander} in case for any left ideal $\mu\leq I$, whenever $J\cap \mu=0$, we have $\mu=0$. By replacing $\mu=0$ with $\mu$ being nilpotent we introduce the notion of \emph{nil-essential ideals}, which is a generalised concept of essential ideals. Our focus in this paper is on ideals of a ring since it deals with the nilpotency property of subsets of a ring. A left(resp. right) ideal $I$ of a ring $R$ (not necessarily commutative) is said to be nil-essential if whenever $I \cap\mu = 0$ for some left(resp. right) ideal $\mu$ of $R$ we have $\mu$ is nilpotent. We shall adopt the notation $I \unlhd_{nil} R $ to mean that $I$ is a nil-essential ideal of $R$.

All essential ideals are nil-essential, however the converse is not true which will be shown later with an example. In this paper we investigate the properties of nil-essential ideals and include results on them and their localisations. While most of the results are for general rings, few of them are restricted to the case when $R$ is a commutative Noetherian ring.

A ring $R$ is called a reduced ring if it has no non-zero nilpotent elements. So, $0$ is the only nilpotent ideal in a reduced ring; naturally it follows that the essential and nil-essential left ideals coincide in such a ring. Therefore in a reduced ring $R$, the intersection of all nil-essential ideals is indeed the socle of $R$.

Section 2 of this paper deals with basic properties of nil-essential ideals. In Section 3, we include the concept of nil-essential monomorphisms. While in the final section, we investigate the localisation of nil-essential ideals.

\section{Nil-Essential Ideals}
We start this section by recording some basic properties of nil-essential ideals.

\begin{proposition}\label{201}
	Let $R$ be a ring. If a left(resp. right) ideal $I$ is nil-essential, then every left(resp. right) ideal of $R$ containing $I$ is nil-essential.
\end{proposition}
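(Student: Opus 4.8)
The plan is to argue directly from the definition, using nothing more than the monotonicity of intersection. Let $J$ be a left ideal of $R$ with $I\subseteq J$; the goal is to show $J\unlhd_{nil}R$. Accordingly, I would start with an arbitrary left ideal $\mu$ of $R$ satisfying $J\cap\mu=0$, and try to conclude that $\mu$ is nilpotent.

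The key step is the observation that $I\cap\mu\subseteq J\cap\mu=0$, hence $I\cap\mu=0$. Since $I$ is nil-essential by hypothesis, this forces $\mu$ to be nilpotent, which is exactly what the definition of nil-essentiality of $J$ demands. As $\mu$ was an arbitrary left ideal with $J\cap\mu=0$, it follows that $J\unlhd_{nil}R$. The right-ideal case is entirely symmetric, replacing ``left'' by ``right'' throughout.

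I do not expect any genuine obstacle here: the statement is the direct analogue of the familiar fact that any submodule containing an essential submodule is essential, and the only modification — that the witnessing ideal $\mu$ is merely required to be nilpotent rather than zero — plays no role in this argument, since $\mu$ is never used beyond being fed into the hypothesis on $I$. The single point worth a sentence of care is the handling of the quantifier: one must verify the nilpotency conclusion for \emph{every} $\mu$ with $J\cap\mu=0$, and the containment $I\subseteq J$ guarantees that each such $\mu$ also satisfies $I\cap\mu=0$, so the hypothesis on $I$ indeed applies uniformly to all of them.
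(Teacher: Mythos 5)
Your argument is correct and is exactly the direct, definition-chasing proof that the paper leaves implicit (it states Proposition \ref{201} without proof as a basic property): from $J\cap\mu=0$ and $I\subseteq J$ one gets $I\cap\mu=0$, so nil-essentiality of $I$ forces $\mu$ nilpotent. Nothing further is needed.
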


\begin{corollary}\label{202}
	Let $R$ be a ring. Let $I_1,I_2,\dots,I_n$ be left(resp. right) ideals of a ring R. Then whenever $ I_1 \cap I_2 \cap\dots\cap I_n $  is nil-essential we have $I_j$ is nil-essential for each $j\in \{1,2,\dots,n\}$. 
\end{corollary}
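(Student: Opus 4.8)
The plan is to deduce this immediately from Proposition \ref{201}. The key observation is the elementary set-theoretic one: for each index $j \in \{1,2,\dots,n\}$ the finite intersection $I_1 \cap I_2 \cap \dots \cap I_n$ is contained in $I_j$, and, being an intersection of left (resp. right) ideals, it is itself a left (resp. right) ideal of $R$. Thus $I_j$ is a left (resp. right) ideal of $R$ that contains the left (resp. right) ideal $I_1 \cap I_2 \cap \dots \cap I_n$.

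Next I would simply invoke Proposition \ref{201}. By hypothesis $I_1 \cap I_2 \cap \dots \cap I_n$ is nil-essential, and Proposition \ref{201} guarantees that every left (resp. right) ideal of $R$ containing a nil-essential one is again nil-essential. Applying this with the nil-essential ideal taken to be $I_1 \cap I_2 \cap \dots \cap I_n$ and the containing ideal taken to be $I_j$, we conclude $I_j \unlhd_{nil} R$. Since $j$ was arbitrary, this holds for every $j \in \{1,2,\dots,n\}$, which is exactly the assertion.

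There is essentially no obstacle: the statement is a direct corollary and requires no induction on $n$, because the single containment $I_1 \cap I_2 \cap \dots \cap I_n \subseteq I_j$ settles each $j$ in one stroke. The only points worth flagging are that we remain entirely within the class of one-sided ideals (an intersection of left ideals is a left ideal), so Proposition \ref{201} applies verbatim, and that only the ``if'' direction is being claimed here — the converse, namely that nil-essentiality of each $I_j$ would force nil-essentiality of the intersection, is a genuinely different question and is not part of this statement.
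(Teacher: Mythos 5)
Your proof is correct and is exactly the argument the paper intends: the corollary follows in one step from Proposition \ref{201} via the containment $I_1 \cap I_2 \cap \dots \cap I_n \subseteq I_j$. No further comment is needed.
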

	% \begin{proof} The proof follows easily from above proposition.		
	% 	\noindent
	% 	
 Converse to the above corollary does not hold in general, for instance consider the ring \\
                $$R= \left\lbrace \left[\begin{array}{lccr}
			a & b & c \\
			0 & a & d \\
			0 & 0 & a
			\end{array}
			\right]: a,b,c,d \in \mathbb Q 
			\right\rbrace$$
			
			then
			$I =  \left \lbrace \left[\begin{array}{lcr}
			0 & b & 0 \\
			0 & 0 & 0 \\
			0 & 0 & 0
			\end{array}
			\right]:b\in \mathbb Q \right\rbrace \quad \text{ and } \quad J =  \left \lbrace \left[\begin{array}{lcr}
			0 & 0 & c \\
			0 & 0 & 0 \\
			0 & 0 & 0
			\end{array}
			\right]:c\in \mathbb Q \right\rbrace \, $ are left ideals of $R$ which are both nil-essential whereas their intersection is not.

   It is to be noted that in the above example we can replace $\Q$ by any ring that contains identity or a non-nilpotent element.
	% 	}
	% \end{proof}

\begin{corollary}\label{203}
	If $ I_1,I_2,\dots,I_n $ are (left)ideals of a ring $R$ such that $I_k $ is nil-essential for some $1\leq k\leq n$. Then $I_1 + I_2 +\dots+ I_n$ is nil-essential.
\end{corollary}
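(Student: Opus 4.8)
The plan is to reduce this to Proposition~\ref{201}. I would begin by observing that the sum $I_1 + I_2 + \dots + I_n$ is a left ideal of $R$ (as a finite sum of left ideals), and that it contains $I_k$, since each summand is a subset of the sum. Having set this up, the result is immediate: $I_k$ is nil-essential by hypothesis, and $I_1 + I_2 + \dots + I_n$ is a left ideal containing $I_k$, so Proposition~\ref{201} applies directly to conclude that $I_1 + I_2 + \dots + I_n$ is nil-essential.

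More explicitly, to verify the containment $I_k \subseteq I_1 + \dots + I_n$, one notes that any $x \in I_k$ can be written as $x = 0 + \dots + 0 + x + 0 + \dots + 0$, placing $x$ in the $k$-th slot and zeros elsewhere, which exhibits $x$ as an element of the sum. The right-ideal case is handled identically, replacing ``left'' by ``right'' throughout. Since Proposition~\ref{201} is stated for both sides, no extra work is needed.

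There is essentially no obstacle here; the statement is a formal consequence of Proposition~\ref{201} together with the trivial fact that a summand embeds in a finite sum. The only point worth a moment's care is ensuring that ``$I_1 + \dots + I_n$'' is indeed a left (resp. right) ideal and not merely an additive subgroup — but finite sums of one-sided ideals are one-sided ideals, so this is routine. I would keep the proof to one or two sentences invoking Proposition~\ref{201}.
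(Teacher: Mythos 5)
Your proof is correct and is exactly the intended argument: the sum $I_1+\dots+I_n$ is a left (resp.\ right) ideal containing the nil-essential ideal $I_k$, so Proposition~\ref{201} applies immediately. This matches the paper's (omitted, but evidently intended) derivation of the corollary from Proposition~\ref{201}.
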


\begin{corollary}\label{204}
	If $ I_1,I_2,\dots,I_n $ are (left)ideals of a ring $R$ such that $\prod_{i=1}^{n}I_i$ is nil-essential. Then $I_i$ is nil-essential for every $1\leq i\leq n$.
\end{corollary}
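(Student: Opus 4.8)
The plan is to exhibit each $I_j$ as a left ideal containing the nil-essential ideal $P:=\prod_{i=1}^n I_i=I_1I_2\cdots I_n$, and then to invoke Proposition \ref{201} (and, if one likes, Corollary \ref{202}).

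First I would check that $P$ is again a (left) ideal of $R$: for $r\in R$ and a generator $a_1a_2\cdots a_n$ of $P$ with $a_k\in I_k$, one has $r(a_1\cdots a_n)=(ra_1)a_2\cdots a_n\in P$ since $I_1$ is a left ideal, and closure under addition is immediate. The substantive step is the containment $P\subseteq I_j$ for each $j$. Writing $P=(I_1\cdots I_{j-1})\,I_j\,(I_{j+1}\cdots I_n)$, with an empty product understood as $R$, we see that $P$ arises from $I_j$ by multiplying on the left by a subset of $R$ and on the right by a subset of $R$, so $P\subseteq R\,I_j\,R\subseteq I_j$; in particular $P\subseteq\bigcap_{j=1}^n I_j$.

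Granting this, the conclusion is immediate: since $P$ is nil-essential and $P\subseteq I_j$, Proposition \ref{201} gives that $I_j$ is nil-essential, for every $j$ (equivalently, $\bigcap_j I_j$ is nil-essential by Proposition \ref{201}, whence each $I_j$ is nil-essential by Corollary \ref{202}). The step carrying the only genuine content is the containment $P\subseteq I_j$, and here I would be careful about one- versus two-sidedness: for the last index $j=n$ one has $P=(I_1\cdots I_{n-1})I_n\subseteq RI_n=I_n$ using only that $I_n$ is a left ideal, whereas for $j<n$ the factors sitting to the right of $I_j$ inside the product are absorbed cleanly only when $I_j$ is two-sided. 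I expect everything else in the argument to be a routine appeal to the earlier results.
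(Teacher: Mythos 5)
Your proof is correct and is exactly the argument the paper intends (the paper states this as an unproved corollary of Proposition \ref{201}): the product $\prod_{i=1}^n I_i$ is contained in each factor $I_j$, so Proposition \ref{201} applies. Your caveat about one-sidedness is well taken --- the containment $\prod_{i=1}^n I_i\subseteq I_j$ for $j<n$ genuinely requires $I_j$ to absorb multiplication on the right, so the statement as phrased for one-sided left ideals only comes for free at $j=n$; the paper glosses over this with its ``(left)ideals'' notation.
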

\begin{corollary}\label{205}
	Let I be an (left)ideal of a ring $R$ such that $I^n$ is nil-essential for some $n\geq1$, then $I$ is nil-essential ideal. 
\end{corollary}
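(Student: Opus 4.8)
The plan is to reduce immediately to one of the statements already established above. Since $I$ is a left ideal, for any $r\in R$ and any $a_1,\dots,a_n\in I$ we have $r(a_1a_2\cdots a_n)=(ra_1)a_2\cdots a_n$ with $ra_1\in I$; hence $I^n$ is again a left ideal of $R$. Moreover each generating product $a_1\cdots a_n$ lies in $I$ (using $a_{n-1}a_n\in I$, then $a_{n-2}(a_{n-1}a_n)\in I$, and so on), so $I^n\subseteq I$.

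With these two observations in hand, the result is immediate from \Cref{201}: $I^n$ is a nil-essential left ideal, and $I$ is a left ideal of $R$ containing it, so $I$ is nil-essential. (Equivalently, one may apply \Cref{204} directly with $I_1=I_2=\cdots=I_n=I$, since then $\prod_{i=1}^n I_i=I^n$, and conclude that each factor $I$ is nil-essential.) There is no real obstacle here; the only thing to take care of is the trivial verification that $I^n$ is a left ideal and that $I^n\subseteq I$, which is needed so that \Cref{201} (or \Cref{204}) actually applies.
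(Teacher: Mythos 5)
Your proof is correct and follows the route the paper clearly intends: observe that $I^n$ is a left ideal contained in $I$ and invoke Proposition \ref{201} (equivalently, Corollary \ref{204} with all factors equal to $I$). The explicit verification that $I^n\subseteq I$ and that $I^n$ is a left ideal is a worthwhile addition, since the paper leaves it implicit.
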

Converse to the above assertion is not always true. For example if, $$R= \left\lbrace \left[\begin{array}{lccr}
		a & b & c \\
		0 & a & d \\
		0 & 0 & a
		\end{array}
		\right]: a,b,c,d \in \mathbb Q 
		\right\rbrace  \quad \text{ and }\quad I = \left \lbrace \left[\begin{array}{lcr}
		0 & 0 & c \\
		0 & 0 & 0 \\
		0 & 0 & 0
		\end{array}
		\right]:c\in \mathbb Q \right\rbrace,$$ then $I$ is nil-essential but $I^2=0$ is not.

\begin{proposition}\label{206}
	Let $I\subseteq J\subseteq K \text{ and }L$ be (left)ideals of a ring R. Then
	\begin{enumerate}
		\item $I\unlhd_{nil} J$ and $J\unlhd_{nil} K$ if $I\unlhd_{nil} K$ .
		\item $I\unlhd_{nil}K$ and $L\unlhd_{nil}K$ if $(I\cap L)\unlhd_{nil}K$.
	\end{enumerate}
\end{proposition}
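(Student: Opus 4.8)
The plan is to unwind the relative notion of nil-essentiality: for left ideals $A\subseteq B$ of $R$, the statement $A\unlhd_{nil}B$ means that every left ideal $\mu$ of $R$ with $\mu\subseteq B$ and $A\cap\mu=0$ is nilpotent. Both parts then follow by choosing an appropriate ``test ideal'' $\mu$ and transporting the vanishing-intersection hypothesis along the chain, exactly in the spirit of the proofs of Proposition~\ref{201} and Corollary~\ref{202}, but keeping careful track of which member of the chain the test ideal lies inside.

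For part (1), assume $I\unlhd_{nil}K$ with $I\subseteq J\subseteq K$. To obtain $I\unlhd_{nil}J$, take any left ideal $\mu\subseteq J$ with $I\cap\mu=0$; since $J\subseteq K$ we also have $\mu\subseteq K$, so the hypothesis $I\unlhd_{nil}K$ applies verbatim and forces $\mu$ nilpotent. To obtain $J\unlhd_{nil}K$, take any left ideal $\mu\subseteq K$ with $J\cap\mu=0$; from $I\subseteq J$ we get $I\cap\mu\subseteq J\cap\mu=0$, hence $I\cap\mu=0$, and again $I\unlhd_{nil}K$ yields that $\mu$ is nilpotent.

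For part (2), assume $(I\cap L)\unlhd_{nil}K$; note $I\cap L\subseteq I\subseteq K$, and the standing hypotheses also give $L\subseteq K$, so the conclusions typecheck. Given a left ideal $\mu\subseteq K$ with $I\cap\mu=0$, we have $(I\cap L)\cap\mu\subseteq I\cap\mu=0$, so $(I\cap L)\cap\mu=0$; by $(I\cap L)\unlhd_{nil}K$ the ideal $\mu$ is nilpotent, which is precisely $I\unlhd_{nil}K$. Interchanging the roles of $I$ and $L$, using $(I\cap L)\cap\mu\subseteq L\cap\mu$, gives $L\unlhd_{nil}K$ by the identical argument.

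I do not expect a substantial obstacle: the proposition is essentially the ``relative'' repackaging of Proposition~\ref{201} and Corollary~\ref{202}. The only point requiring care is bookkeeping, namely that the test ideal $\mu$ must be chosen inside the correct term of the chain ($\mu\subseteq J$ when verifying $I\unlhd_{nil}J$, and $\mu\subseteq K$ in the remaining cases), together with the elementary monotonicity $A\subseteq A'\Rightarrow A\cap\mu\subseteq A'\cap\mu$. Once these inclusions are aligned, the hypothesis hands over nilpotency of $\mu$ directly, and no computation is needed.
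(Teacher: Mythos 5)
Your proof is correct and takes essentially the same route as the paper, which simply appeals to the standard argument for essential ideals in Goodearl; you have just written out that transfer explicitly, with the right bookkeeping of where each test ideal $\mu$ lives. The only cosmetic remark is that $L\subseteq K$ is not literally listed among the hypotheses, but your reading of it as implicit is the intended one, and your argument for $L\unlhd_{nil}K$ goes through either way.
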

\begin{proof}
	The proof is similar to the case of essential ideals in \cite{goodearl}.
\end{proof}

\begin{proposition}\label{207}
	Let $R$ be a ring. Let $M$ be a maximal ideal and $\mu$ an ideal of $R$ such that $\mu\cap M=0$, then either $\mu=0$ or $\mu$ is non-nilpotent.
\end{proposition}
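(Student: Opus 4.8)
The plan is to prove the contrapositive: assuming $\mu \neq 0$ and $\mu$ nilpotent, I will show that $M = R$, which is impossible since a maximal ideal is proper. So fix $M$ maximal and $\mu$ an ideal with $\mu\cap M=0$, and suppose toward a contradiction that $\mu$ is nilpotent but $\mu\neq 0$.

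The first step is to observe that $\mu \not\subseteq M$: if $\mu \subseteq M$ then $\mu = \mu \cap M = 0$, against our assumption. (This is in fact the only place the hypothesis $\mu \cap M = 0$ is used.) Since $M$ is maximal, the ideal $M + \mu$ strictly contains $M$ and hence equals $R$; in particular there exist $m \in M$ and $e \in \mu$ with $1 = m + e$.

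The second step uses the nilpotency of $\mu$. Say $\mu^n = 0$. Then $e^n$, being a product of $n$ elements of $\mu$, lies in $\mu^n$, so $e^n = 0$; that is, $e$ is a nilpotent element of $R$. Consequently $m = 1 - e$ is a unit of $R$, with inverse $1 + e + e^2 + \cdots + e^{n-1}$ (here we invoke the standing assumption that $R$ has an identity). But $m \in M$, so $M$ contains a unit and therefore $M = R$, the desired contradiction. Hence whenever $\mu \cap M = 0$, either $\mu = 0$ or $\mu$ is non-nilpotent.

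I do not expect a serious obstacle; the only points deserving a moment's care are that the containment $e^n \in \mu^n$ is valid whether $\mu$ is taken to be a left, right, or two-sided ideal (since $\mu^n$ always contains every length-$n$ product of elements of $\mu$, and $M+\mu=R$ likewise holds in the one-sided setting by maximality), and that the ``$1-e$ is a unit'' trick relies on the ambient identity, which is available by our blanket convention on rings.
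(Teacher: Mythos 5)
Your proof is correct and follows essentially the same route as the paper: both arguments produce an element of $\mu$ outside $M$, use maximality to write $1=m+e$ with $m\in M$ and $e$ in (a left multiple of) $\mu$, and conclude that if $e$ were nilpotent then $m=1-e$ would be a unit lying in $M$, a contradiction. The only cosmetic difference is that the paper works with $M+Rx=R$ for a single $x\in\mu\setminus M$ and deduces directly that $rx\in\mu$ is non-nilpotent, whereas you assume $\mu^n=0$ and derive $e^n=0$; both are valid.
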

\begin{proof}
	Let $\mu\cap M=0$. If $\mu\neq0$ then there exists $x\in \mu$ such that $x\notin M$. Which follows that $M + Rx = R$ and $1\in R$ and therefore we have $m+rx=1$ for some $r\in R$ and $m\in M$. Hence, $rx$ cannot be nilpotent, otherwise $m\in M$ would be a unit. Therefore, $\mu$ is non-nilpotent.
\end{proof}

The following corollary will provide us with a necessary condition for a maximal ideal of a ring $R$ to be nil-essential. 
\begin{corollary}\label{208}
	For a maximal ideal $M$ of a ring $R$, $M$ is nil essential implies that $M$ intersects any non-zero ideal non-trivially. 
\end{corollary}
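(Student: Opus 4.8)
The plan is to prove the contrapositive: if $M$ fails to intersect some non-zero ideal $\mu$ non-trivially, then $M$ is not nil-essential. So suppose there is a non-zero ideal $\mu$ of $R$ with $\mu \cap M = 0$. By Proposition~\ref{207}, since $\mu \neq 0$, we conclude that $\mu$ is non-nilpotent. Thus we have exhibited an ideal $\mu$ with $M \cap \mu = 0$ and $\mu$ not nilpotent, which is exactly the failure of the defining condition for $M \unlhd_{nil} R$. Hence $M$ is not nil-essential, and the contrapositive gives the claim.

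Writing this more directly: assume $M \unlhd_{nil} R$ and let $\mu$ be any non-zero ideal of $R$. I want $\mu \cap M \neq 0$. Suppose for contradiction that $\mu \cap M = 0$. Then the nil-essentiality of $M$ forces $\mu$ to be nilpotent. But Proposition~\ref{207} applies to the situation $\mu \cap M = 0$ with $M$ maximal, and since $\mu \neq 0$ it yields that $\mu$ is non-nilpotent — a contradiction. Therefore $\mu \cap M \neq 0$, as desired.

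I do not anticipate any real obstacle here: the statement is essentially an immediate consequence of Proposition~\ref{207} combined with unwinding the definition of nil-essential. The only thing to be slightly careful about is the logical bookkeeping — the defining property of a nil-essential ideal $I$ is stated as an implication (``$I \cap \mu = 0$ implies $\mu$ nilpotent''), so I must make sure I am contradicting that implication correctly, i.e. producing a $\mu$ that makes the hypothesis true but the conclusion false. Proposition~\ref{207} delivers precisely such a $\mu$ once we assume $\mu \neq 0$.

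One remark worth including: the word ``necessary'' in the sentence preceding the corollary signals that this is only one direction, and indeed the converse need not hold in general, so I would present the proof exactly as above without attempting an equivalence.
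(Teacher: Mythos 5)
Your proof is correct and is exactly the argument the paper intends: Corollary~\ref{208} is stated as an immediate consequence of Proposition~\ref{207} combined with the definition of nil-essentiality, and your contradiction/contrapositive bookkeeping matches that. No gaps.
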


\noindent We recall that an $R$-module $M$ is semi-simple if and only if every submodule of $M$ is a direct summand of $M$ \cite{goodearl}.

\begin{corollary}\label{209}
	Let $R$ be a ring. Then the following statements are equivalent:
	\begin{enumerate}
		\item Every proper ideals are non nil-essential.
		\item Every maximal ideals are non nil-essential.
		\item $R$ is a semi-simple ring.
	\end{enumerate}
\end{corollary}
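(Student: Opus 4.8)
The plan is to prove the implications $(1)\Rightarrow(2)\Rightarrow(1)$ and then $(1)\Rightarrow(3)\Rightarrow(1)$, which together give the full equivalence. I argue throughout with left ideals; the statement for right ideals is symmetric, and ring semisimplicity is left--right symmetric. The implication $(1)\Rightarrow(2)$ is immediate, since a maximal ideal is in particular a proper ideal. For $(2)\Rightarrow(1)$, let $I$ be any proper ideal of $R$; because $R$ has an identity, $I$ is contained in some maximal ideal $M$. If $I$ were nil-essential, then by Proposition~\ref{201} every ideal containing $I$, and in particular $M$, would be nil-essential, contradicting $(2)$; hence $I$ is non nil-essential.

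For $(1)\Rightarrow(3)$ I would use the fact, recorded in the introduction, that every essential ideal is nil-essential. Assuming $(1)$, no proper ideal of $R$ is essential, so $R$ is the only essential left ideal of $R$; since $socle(_RR)=\bigcap\{I\mid I\text{ essential left ideal}\}$, this forces $socle(_RR)=R$, i.e.\ $_RR$ is a semisimple module and $R$ is a semisimple ring. (One can also bypass the socle identity: if $R$ were not semisimple, the criterion recalled just before the statement produces a left ideal $A$ that is not a direct summand; a left ideal $B$ maximal with respect to $A\cap B=0$ then makes $A\oplus B$ essential and $\neq R$, hence a proper essential — and therefore nil-essential — ideal, contradicting $(1)$.)

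For $(3)\Rightarrow(1)$, let $R$ be semisimple and let $I$ be a proper left ideal. By the recalled criterion $I$ is a direct summand, say $R=I\oplus J$ for a left ideal $J$; since $I\neq R$ we have $J\neq 0$, while $I\cap J=0$. Now a semisimple ring has zero Jacobson radical and every nil one-sided ideal lies in the Jacobson radical, so $R$ has no non-zero nilpotent left ideals; in particular $J$ is not nilpotent. Thus the relation $I\cap J=0$ with $J$ non-nilpotent witnesses that $I$ is non nil-essential, which is exactly $(1)$.

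I expect the only genuinely non-formal inputs to be the two classical facts invoked above — that $socle(_RR)$ equals the intersection of the essential left ideals (equivalently, that a ring with no proper essential left ideal is semisimple) and that a semisimple ring has no non-zero nilpotent one-sided ideals — so the main care needed is to cite these correctly and to keep the bookkeeping between ``left ideal'', ``right ideal'' and ``ring semisimplicity'' straight; everything else is a direct application of Proposition~\ref{201} together with the semisimplicity criterion recalled just before the statement.
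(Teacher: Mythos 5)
Your proof is correct and complete. The paper itself states this result as a corollary with no written proof, so there is no official argument to compare against; the implicit intended route is presumably through Proposition~\ref{207} and Corollary~\ref{208}, which show that for a \emph{maximal} left ideal $M$ the conditions ``nil-essential'' and ``essential'' coincide (any $\mu$ with $M\cap\mu=0$ is either $0$ or non-nilpotent), so that statement (2) forces every maximal left ideal to be a direct summand, whence $\soc{\lreg{R}}=R$. You instead close the cycle as $(2)\Rightarrow(1)$ via Proposition~\ref{201} and then get $(1)\Rightarrow(3)$ from the socle identity (or the complement construction); this is equally valid and arguably cleaner, since it isolates the one place where ``essential $\Rightarrow$ nil-essential'' is used. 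Your handling of $(3)\Rightarrow(1)$ correctly supplies the fact the paper leaves tacit, namely that a semisimple ring has no nonzero nilpotent one-sided ideals, which is exactly what upgrades ``$I$ has a nonzero complement'' to ``$I$ has a non-nilpotent complement.'' One remark worth keeping explicit in a final write-up: the equivalence is only true if ``ideal'' is read as ``left (or right) ideal'' throughout, as you do --- for two-sided ideals statement (1) holds vacuously in any simple ring, including non-semisimple ones such as Weyl algebras, so the two-sided reading would make the corollary false.
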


\begin{corollary}\label{210}
	Let $R$ be a ring. Let $J$ be the Jacobson `ical of $R$ and $\mu$ be an ideal such that $\mu\cap  J=0$. Then either $\mu=0$ or $\mu$ is non-nilpotent.
\end{corollary}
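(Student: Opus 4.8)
The plan is to obtain this from Proposition~\ref{207} by exploiting the description of the Jacobson radical as an intersection of maximal left ideals, $J=\bigcap\{M : M \text{ a maximal left ideal of } R\}$. So I would suppose $\mu\cap J=0$ with $\mu\neq 0$, aim to show $\mu$ is non-nilpotent, and argue by contradiction, assuming $\mu^{k}=0$ for some $k\geq 1$.

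First I would note that $\mu\nsubseteq J$, since $\mu\subseteq J$ would give $\mu=\mu\cap J=0$. Choose $x\in\mu\setminus J$; because $x\notin J$ there is a maximal left ideal $M$ with $x\notin M$, so that $Rx+M$ is a left ideal strictly larger than $M$ and hence all of $R$. Writing $1=rx+m$ with $r\in R$ and $m\in M$, I observe that $rx\in\mu$ (as $\mu$ is a left ideal), whence $(rx)^{k}\in\mu^{k}=0$; therefore $m=1-rx$ is a unit, with inverse $1+rx+\cdots+(rx)^{k-1}$. This contradicts $m\in M\subsetneq R$, so $\mu$ cannot be nilpotent. The right-ideal case is entirely symmetric, using maximal right ideals. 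Alternatively, and even more briefly, one may invoke directly the standard fact that any nilpotent one-sided ideal of $R$ is contained in $J$: then $\mu$ nilpotent would force $\mu\subseteq J$ and hence $\mu=\mu\cap J=0$.

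I do not anticipate a genuine obstacle. The only points requiring a little care are that one must use $J$ in its form as the intersection of the maximal \emph{left} (or right) ideals --- not the intersection of maximal two-sided ideals, which is a coarser invariant in the noncommutative setting --- and that the element $rx$ produced above genuinely lies in $\mu$, which is exactly the place where the (one-sided) ideal structure of $\mu$ is used. Everything else is a transcription of the argument already given for Proposition~\ref{207}.
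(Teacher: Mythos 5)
Your proof is correct and follows essentially the same route as the paper, which simply adapts the argument of Proposition~\ref{207}: pick $x\in\mu$ outside some maximal left ideal $M$ (available since $x\notin J=\bigcap M$), write $1=m+rx$, and observe that $rx\in\mu$ nilpotent would make $m$ a unit. Your closing remark that one could instead quote the standard fact that nilpotent one-sided ideals lie in $J$ is a valid shortcut, but the main argument you give is the paper's intended one.
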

\begin{proof}
	Similar to Proposition \ref{207}.
\end{proof}

\begin{remark}\label{211}
\hfill
 \begin{enumerate}
     \item Corollary \ref{208} holds if we replace $M$ by $J$, the Jacobson radical of the given ring. In a semi-simple ring, the Jacobson radical (or nil - radical) and any maximal ideal cannot be nil-essential. Indeed, in a semi-simple ring both the Jacobson radical as well as the nil-radical are zero. Consequently, in a commutative semi-simple ring we cannot obtain a nil-essential prime ideal. 
     %It thus follows that in a semi-simple commutative ring, prime ideals cannot be nil-essential, because in this ring all prime ideals are maximal.
     \item Corollary \ref{208} fails to hold when the Jacobson radical of the ring is zero. For instance, if $R=\mathbb Z$ every non-zero ideal of $R$ is nil-essential.
 \end{enumerate}

\end{remark}

The following result will provide a necessary and sufficient criterion for a non-zero ideal of a Noetherian ring $R$ to be nil-essential. 

\begin{lemma}\label{212}
	Let $R$ be a commutative Noetherian ring and $I$ a non-zero ideal. Then
	 $I$ is a nil-essential ideal if and only if for each $x\in R$ with $x$ non-nilpotent there exists $r\in R$ such that $rx\in I$ and $rx\neq0$.
	
\end{lemma}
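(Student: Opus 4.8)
The plan is to prove the two implications separately, taking the principal ideal $Rx$ as the "test ideal" in the forward direction, and invoking nilpotence of the nilradical of a Noetherian ring in the reverse direction.

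For the forward implication, suppose $I\unlhd_{nil}R$ and let $x\in R$ be non-nilpotent. I would observe first that the assertion ``there exists $r\in R$ with $rx\in I$ and $rx\neq 0$'' is exactly the statement $Rx\cap I\neq 0$. So I argue by contradiction: if $Rx\cap I=0$, then nil-essentiality of $I$ forces $Rx$ to be nilpotent, say $(Rx)^{n}=0$. Since $R$ is commutative with identity one has $(Rx)^{n}=Rx^{n}$, so $x^{n}\in Rx^{n}=0$, contradicting that $x$ is non-nilpotent. Note that this direction uses neither the Noetherian hypothesis nor $I\neq 0$.

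For the reverse implication, suppose the elementwise condition holds and let $\mu$ be a (left) ideal with $I\cap\mu=0$; the goal is to show $\mu$ is nilpotent. The key input is the standard fact that in a commutative Noetherian ring the nilradical $\mathrm{Nil}(R)$ is nilpotent, say $\mathrm{Nil}(R)^{k}=0$. If $\mu\subseteq\mathrm{Nil}(R)$ then $\mu^{k}=0$ and we are done; otherwise there is $x\in\mu$ that is non-nilpotent. Applying the hypothesis to this $x$ gives $r\in R$ with $0\neq rx\in I$, but $rx\in\mu$ because $\mu$ is an ideal, so $rx\in I\cap\mu=0$, a contradiction. Hence $\mu\subseteq\mathrm{Nil}(R)$, so $\mu$ is nilpotent and $I\unlhd_{nil}R$.

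The only genuinely non-formal ingredient, and therefore the step to be careful about, is the nilpotence of $\mathrm{Nil}(R)$ for $R$ commutative Noetherian — this is precisely where the Noetherian assumption is used; everything else is a short manipulation. I would also flag the small linking fact that, in a commutative ring with identity, a principal ideal $Rx$ is nilpotent if and only if $x$ is nilpotent, since that is what translates the ideal-theoretic definition of nil-essentiality into the elementwise criterion stated in the lemma.
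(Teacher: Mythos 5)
Your proof is correct and follows essentially the same route as the paper: the forward direction tests nil-essentiality against the principal ideal $Rx$, and the converse rests on the fact that in a commutative Noetherian ring an ideal consisting entirely of nilpotent elements is itself nilpotent (the paper invokes this same fact in the form ``a non-nilpotent ideal of a Noetherian ring contains a non-nilpotent element''). There are no gaps.
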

\begin{proof}
	Suppose $x\in R$ is non-nilpotent, then $I\cap Rx\neq0$, since $I$ is nil-essential, there exists $r\in R$ such that $rx\in I$ and $rx\neq0$. 
 
 Conversely, assume $\mu$ to be a non-nilpotent ideal of $R$ such that $I\cap\mu=0$. Then, there exists $x\in \mu$ such that $x$ is not nilpotent (since $R$ is noetherian \cite{atiyah}). By hypothesis, there exists $r\in R$ such that $rx\neq0$ and $rx\in I$. But, this yields that $rx \in I \cap \mu$, a contradiction. Therefore, $\mu$ is nilpotent and so $I$ is nil-essential. 
\end{proof}

\begin{lemma}\label{213}
	Let $R$ be a commutative noetherian ring and $I\subseteq J$ be non-zero ideals. Then, $I\unlhd_{nil}J$ if and only if for each $x\in J$ with $x$ is non-nilpotent there exists $r\in R$ such that $rx\in I$ and $rx\neq0$.
	
\end{lemma}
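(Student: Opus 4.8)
The plan is to imitate the proof of Lemma~\ref{212} almost verbatim, letting the ideal $J$ play the role that the whole ring $R$ played there, while still forming principal ideals with coefficients in all of $R$. The only genuinely new point to watch is that every test ideal one feeds into the definition of nil-essentiality must be checked to lie inside $J$, not merely inside $R$.

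For the forward implication, suppose $I\unlhd_{nil}J$ and pick $x\in J$ with $x$ non-nilpotent. Since $J$ is an ideal, the principal ideal $Rx$ is contained in $J$; and since $R$ is commutative with identity, $(Rx)^{n}=Rx^{n}$, so $Rx$ is non-nilpotent precisely because $x$ is (if $(Rx)^n=0$ then $x^n=1\cdot x^n\in (Rx)^n=0$). Because $I$ is nil-essential in $J$ and $Rx\subseteq J$ is a non-nilpotent ideal, we cannot have $I\cap Rx=0$; hence $I\cap Rx\neq 0$, and any non-zero element of this intersection has the form $rx$ with $rx\in I$ and $rx\neq 0$, as required.

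For the converse, assume the stated condition holds and let $\mu\subseteq J$ be an ideal with $I\cap\mu=0$; I must show $\mu$ is nilpotent. This is where commutative Noetherianness is used, exactly as in Lemma~\ref{212} (citing \cite{atiyah}): if every element of $\mu$ were nilpotent, then $\mu$ would sit inside the nilradical of $R$, which is nilpotent since $R$ is Noetherian, forcing $\mu$ nilpotent. Otherwise $\mu$ contains some non-nilpotent $x$; as $\mu\subseteq J$ we have $x\in J$, so the hypothesis produces $r\in R$ with $rx\in I$ and $rx\neq 0$. But $rx\in\mu$ too, whence $rx\in I\cap\mu=0$, a contradiction. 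Therefore $\mu$ is nilpotent and $I\unlhd_{nil}J$.

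I expect no real obstacle: the argument is essentially that of Lemma~\ref{212} relativised to $J$, and the Noetherian hypothesis enters only to pass from ``$\mu$ is not nilpotent'' to ``$\mu$ contains a non-nilpotent element.'' The mild bookkeeping point is simply to note that $Rx\subseteq J$ in the forward direction (automatic since $x\in J$) and that $\mu\subseteq J$ in the converse (built into the hypothesis), so that nil-essentiality of $I$ in $J$ may legitimately be applied.
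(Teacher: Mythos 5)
Your proof is correct and follows essentially the same route as the paper, which simply declares Lemma \ref{213} to be ``similar to Lemma \ref{212}'': the forward direction via the non-nilpotent principal ideal $Rx\subseteq J$, and the converse by extracting a non-nilpotent element from a non-nilpotent ideal using the Noetherian hypothesis. Your explicit bookkeeping that $Rx\subseteq J$ and $\mu\subseteq J$ is exactly the relativisation the paper leaves implicit.
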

\begin{proof}
	Similar to Lemma \ref{212}.
\end{proof}

\begin{proposition}\label{214}
	Let $R$ be a commutative noetherian ring and $I$ an ideal. Then $I\unlhd_{nil}rad(I)$, where $rad(I)$ is the radical of $I$.
\end{proposition}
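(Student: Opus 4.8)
The plan is to reduce the statement to Lemma~\ref{213} and then dispose of one degenerate case by hand. Since $I\subseteq \mathrm{rad}(I)$, whenever both ideals are nonzero Lemma~\ref{213} says that $I\unlhd_{nil}\mathrm{rad}(I)$ is equivalent to: for every non-nilpotent $x\in\mathrm{rad}(I)$ there is $r\in R$ with $rx\in I$ and $rx\neq 0$. So I would take such an $x$, use $x\in\mathrm{rad}(I)$ to fix $n\geq 1$ with $x^{n}\in I$, and set $r=x^{n-1}$; then $rx=x^{n}\in I$, and $rx=x^{n}\neq 0$ precisely because $x$ is not nilpotent. This settles the main case.

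It remains to deal with $I=0$ (the case $\mathrm{rad}(I)=0$ is trivial, since then $I=0$ as well). Here $\mathrm{rad}(0)$ is the nilradical of $R$, which is a nilpotent ideal because $R$ is Noetherian; hence every ideal $\mu\subseteq\mathrm{rad}(0)$ is nilpotent, so $0\unlhd_{nil}\mathrm{rad}(0)$ holds directly from the definition. Alternatively, one can give a single uniform argument that avoids the case split: given an ideal $\mu\subseteq\mathrm{rad}(I)$ with $\mu\cap I=0$, write $\mu=Ra_{1}+\dots+Ra_{k}$ (possible since $R$ is Noetherian), observe that for suitable $n_{i}\geq 1$ one has $a_{i}^{\,n_{i}}\in I\cap\mu=0$, and then conclude with the standard fact that a finitely generated ideal with nilpotent generators is nilpotent (any product of $\sum_{i}(n_{i}-1)+1$ of the generators must, by pigeonhole, repeat some $a_{j}$ at least $n_{j}$ times, hence vanishes).

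I do not anticipate a genuine obstacle here; the only points that need care are that Lemma~\ref{213} is stated for nonzero ideals, so the $I=0$ case has to be argued separately, and that the Noetherian hypothesis is really being used twice in spirit: once (through Lemma~\ref{213}) to pass from a non-nilpotent ideal to a non-nilpotent element, and implicitly to guarantee that the nilradical is nilpotent.
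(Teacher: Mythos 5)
Your argument is correct and is essentially the paper's own proof: the paper also deduces the claim from Lemma~\ref{213} by taking $r=x^{n-1}$ for a non-nilpotent $x\in\mathrm{rad}(I)$ with $x^{n}\in I$. Your extra handling of the $I=0$ case (where Lemma~\ref{213} does not literally apply) is a sensible refinement that the paper glosses over.
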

\begin{proof}
	Follows easily from definition of $rad(I)$ and Lemma \ref{213}.
\end{proof}

\begin{proposition}\label{215}
	Let $R$ be a noetherian ring. Let $I_1, I_2, J_1, J_2$ be ideals of $R$ such that $I_1 \subseteq J_1$, $I_2\subseteq J_2$ and $J_1\cap J_2=0$. Then the following are equivalent:
	\begin{enumerate}
		\item $I_1\unlhd_{nil}J_1$ and $I_2\unlhd_{nil}J_2$.
		\item $I_1\oplus I_2\unlhd_{nil}J_1\oplus J_2$.
	\end{enumerate}
\end{proposition}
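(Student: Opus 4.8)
The plan is to reduce everything to the single identity $J_1J_2=J_2J_1=0$, which holds because $J_1J_2\subseteq J_1\cap J_2=0$ (and likewise $J_2J_1$). Note at the outset that $I_1\cap I_2\subseteq J_1\cap J_2=0$, so the internal sums $I_1\oplus I_2=I_1+I_2$ and $J_1\oplus J_2=J_1+J_2$ are genuinely direct, and that if $y\in(I_1\oplus I_2)\cap J_1$ then, writing $y=a_1+a_2$ with $a_i\in I_i$, we get $a_2=y-a_1\in J_1\cap J_2=0$; hence $(I_1\oplus I_2)\cap J_1=I_1$ and symmetrically $(I_1\oplus I_2)\cap J_2=I_2$.

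For $(2)\Rightarrow(1)$ I would argue straight from the definition. Given an ideal $\lambda\subseteq J_1$ with $\lambda\cap I_1=0$, we have $\lambda\subseteq J_1\oplus J_2$ and $\lambda\cap(I_1\oplus I_2)=\lambda\cap\big((I_1\oplus I_2)\cap J_1\big)=\lambda\cap I_1=0$ by the computation above; since $I_1\oplus I_2\unlhd_{nil}J_1\oplus J_2$, the ideal $\lambda$ is nilpotent, so $I_1\unlhd_{nil}J_1$, and the case of $I_2$ is symmetric. This direction is routine.

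The implication $(1)\Rightarrow(2)$ is the substantive one. Let $\mu\subseteq J_1\oplus J_2$ be an ideal with $\mu\cap(I_1\oplus I_2)=0$; I must show $\mu$ is nilpotent. First, $\mu\cap J_1$ and $\mu\cap J_2$ are ideals contained in $J_1$, resp.\ $J_2$, with $(\mu\cap J_i)\cap I_i=\mu\cap I_i\subseteq\mu\cap(I_1\oplus I_2)=0$, so by the hypotheses $I_i\unlhd_{nil}J_i$ both $\mu\cap J_1$ and $\mu\cap J_2$ are nilpotent. Next I use $J_1J_2=J_2J_1=0$: for $m\in\mu$ and $y\in J_1$, writing $m=a+b$ with $a\in J_1$, $b\in J_2$, we get $my=ay+by=ay\in J_1$, whence $\mu J_1\subseteq\mu\cap J_1$, and symmetrically $\mu J_2\subseteq\mu\cap J_2$. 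Since $\mu\subseteq J_1+J_2$, this yields $\mu^2\subseteq\mu(J_1+J_2)=\mu J_1+\mu J_2\subseteq(\mu\cap J_1)+(\mu\cap J_2)$, a sum of two nilpotent ideals, hence nilpotent (pass to $R/(\mu\cap J_1)$). Therefore $\mu^2$, and so $\mu$, is nilpotent, giving $I_1\oplus I_2\unlhd_{nil}J_1\oplus J_2$.

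I expect the only real obstacle to be locating the right intermediate object in $(1)\Rightarrow(2)$: the naive attempts — trying to split $\mu$ itself as $(\mu\cap J_1)\oplus(\mu\cap J_2)$, or trying to show the coordinate projections $\pi_i(\mu)$ meet $I_i$ trivially — both fail in general, and one must instead notice that it is enough to trap $\mu^2$ (not $\mu$) inside $(\mu\cap J_1)+(\mu\cap J_2)$, which is precisely what $J_1J_2=0$ provides. Once this containment is in hand the argument is short, and in fact it uses neither Lemma~\ref{213} nor the Noetherian hypothesis, relying only on the definition of nil-essentiality and on the fact that a sum of nilpotent ideals is nilpotent.
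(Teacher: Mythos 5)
Your proof is correct, and the direction $(1)\Rightarrow(2)$ takes a genuinely different route from the paper's. The paper argues via Lemma \ref{212}/\ref{213}: it takes a non-nilpotent $x=x_1+x_2\in J_1\oplus J_2$, observes (using that nilpotents form an ideal in a commutative ring) that some $x_i$ is non-nilpotent, computes $x_1x=x_1^2$ from $J_1J_2=0$, and produces $r$ with $rx_1^2\in I_1\setminus\{0\}$; this element-wise criterion is exactly where the commutative Noetherian hypothesis enters (a non-nilpotent ideal must contain a non-nilpotent element). Your argument instead works directly with an ideal $\mu\subseteq J_1\oplus J_2$ meeting $I_1\oplus I_2$ trivially, shows $\mu\cap J_i$ is nilpotent from $I_i\unlhd_{nil}J_i$, and traps $\mu^2$ inside $(\mu\cap J_1)+(\mu\cap J_2)$ via $J_1J_2=J_2J_1=0$, concluding with the fact that a sum of nilpotent ideals is nilpotent. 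This buys real generality: you use neither commutativity nor the Noetherian hypothesis, only that the $I$'s, $J$'s and $\mu$ are two-sided ideals (you do need two-sidedness of $\mu$ for $\mu J_i\subseteq\mu$, but that matches the statement, which speaks of ideals rather than one-sided ideals). Your $(2)\Rightarrow(1)$ is the same as the paper's, with the identity $(I_1\oplus I_2)\cap J_1=I_1$ made explicit where the paper merely asserts it.
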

\begin{proof}
	$(2) \Rightarrow (1)$: Let $\mu$ be an ideal of $R$ and $\mu\subseteq J_1$ such that $I_1\cap \mu=0$, then $I_1\oplus I_2\cap \mu=0$. Therefore $\mu$ is nilpotent (since $I_1\oplus I_2\unlhd_{nil}J_1\oplus J_2$ and $\mu\leq J_1\leq J_1\oplus J_2$). Hence, $I_1\unlhd_{nil}J_1$. Similarly, $I_2\unlhd_{nil}J_2$.
	
	\qquad$(1) \Rightarrow (2)$: Let $x\in (J_1\oplus J_2)$ be non-nilpotent. Then  $x=x_1+x_2$ is non-nilpotent, for some $x_1\in J_1$ and $x_2\in J_2$. Then we have $x_1$ or $x_2$ is non-nilpotent. Without any loss, $x_1$ is non-nilpotent, then $x_1x=x_1(x_1+x_2)=x_1^2\neq0$. As $x_1$ is non-nilpotent so is $x_1^2$. Therefore since $I_1\unlhd_{nil}J_1$, there exists $r\in R$ such that $rx_1^2\in I_1\subseteq I_1\oplus I_2$ and $rx_1^2\neq0$. Hence by Lemma \ref{213}, $I_1\oplus I_2\unlhd_{nil}J_1\oplus J_2$.
\end{proof}

Recall that, for an ideal $I$ of a ring $R$ and for $a\in R$, the ideal quotient $(I:a)$ is defined as $(I:a):=\{r\in R: ra\in I\}$.

\begin{proposition}\label{216}
	Let $I$ and $J$ be ideals over a commutative noetherian ring $R$ such that $I\unlhd_{nil}J$ then $(I:a)\unlhd_{nil}R$ for every $a\in J$.
\end{proposition}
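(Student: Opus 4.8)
The plan is to verify nil-essentiality straight from the definition. Fix $a\in J$, let $\mu$ be an arbitrary ideal of $R$ with $(I:a)\cap\mu=0$, and aim to show that $\mu$ is nilpotent. The central device is the ideal $\mu a:=\{ya:y\in\mu\}$, which really is an ideal of $R$ (here commutativity is used), and which satisfies $\mu a\subseteq Ra\subseteq J$ because $a\in J$. I would first check that $\mu a\cap I=0$: if $ya\in I$ with $y\in\mu$, then $y\in(I:a)$ by definition of the ideal quotient, so $y\in\mu\cap(I:a)=0$ and hence $ya=0$. Thus $\mu a$ is an ideal sitting inside $J$ and meeting $I$ trivially, so the hypothesis $I\unlhd_{nil}J$ forces $\mu a$ to be nilpotent; fixing $k\ge 1$ with $(\mu a)^{k}=0$ and using $(\mu a)^{k}=\mu^{k}a^{k}$, this reads $\mu^{k}a^{k}=0$.

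The second step is a finite descending induction that strips off the powers of $a$. Given $\mu^{k}a^{j}=0$ for some $j\ge 1$, I would argue that $\mu^{k}a^{j-1}=0$: any element $z=ya^{j-1}$ with $y\in\mu^{k}$ satisfies $za=ya^{j}\in\mu^{k}a^{j}=0\subseteq I$, so $z\in(I:a)$; but $z$ also lies in $\mu^{k}\subseteq\mu$, whence $z\in\mu\cap(I:a)=0$. Running this from $j=k$ down to $j=1$ yields $\mu^{k}=\mu^{k}a^{0}=0$, which is exactly the assertion that $\mu$ is nilpotent. The degenerate possibilities (for instance $a\in I$, so $(I:a)=R$, or $\mu a=0$) are swept up by the same computation, since then the standing hypothesis $(I:a)\cap\mu=0$ already pins $\mu$ down.

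I expect the only delicate point to be the routine-but-essential bookkeeping: confirming that $\mu a$ is genuinely an ideal of $R$, that it is contained in $J$, and that $\mu a\cap I=0$ (so that it is a legitimate witness against $I\unlhd_{nil}J$), and then, inside the induction, keeping each $\mu^{k}a^{j}$ inside $\mu$ so the trivial-intersection hypothesis can be reapplied at every stage. Everything else is formal. One could instead route the proof through the pointwise criterion of Lemma~\ref{212}, but the ideal-theoretic version above is shorter and, as it happens, does not seem to need the Noetherian hypothesis beyond what is already built into the relative notion $I\unlhd_{nil}J$.
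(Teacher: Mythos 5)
Your argument is correct, and it takes a genuinely different route from the paper's. The paper proves the proposition by invoking the element-wise criterion of Lemma~\ref{213} (for every non-nilpotent $x\in R$ produce $r$ with $rx\neq 0$ and $rx\in(I:a)$), splitting into cases according to whether $xa$ is nilpotent, and in the nilpotent case locating the last non-vanishing term of the chain $x^na^{n}, x^na^{n-1},\dots,x^n$. You instead work straight from the definition: the test ideal $\mu a$ sits inside $J$, meets $I$ trivially precisely because $(I:a)\cap\mu=0$, hence is nilpotent, and the descending induction $\mu^k a^{j}=0\Rightarrow\mu^k a^{j-1}=0$ (each $ya^{j-1}$ lands in $(I:a)\cap\mu=0$) strips the powers of $a$ to give $\mu^k=0$. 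All the steps you flag as bookkeeping do check out: $\mu a=\{ya:y\in\mu\}$ is an ideal of $R$ contained in $Ra\subseteq J$, every element of $\mu^k a^{j-1}$ has the form $ya^{j-1}$ with $y\in\mu^k\subseteq\mu$, and $(\mu a)^k=\mu^k a^k$. What your version buys is worth noting explicitly: it bypasses Lemma~\ref{213} entirely, and since that lemma is where the Noetherian hypothesis enters (a non-nilpotent ideal of a Noetherian ring must contain a non-nilpotent element), your proof establishes the proposition for an arbitrary commutative ring. Commutativity is still genuinely used (to make $\mu a$ an ideal and to collapse $(\mu a)^k$ to $\mu^k a^k$), so the generalization stops there, but within the commutative setting your argument is both shorter and strictly more general than the paper's.
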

\begin{proof}
	By Lemma \ref{213}, in order to show that $(I:a)\unlhd_{nil}R$, we have to show that for every $x\in R$, with $x$ non-nilpotent there exists an $r\in R$ such that $rx\neq0$ and $rx\in (I:a)$.
	
	We have the following cases for any non-nilpotent element $x\in R$.
	
	Case(i): If $xa$ is non-nilpotent. Since $xa\in J$ and $I\unlhd_{nil}J$, by Lemma \ref{213}, there exists $r\in R$ such that $rxa\neq0$ and $rxa\in I$. Therefore $rx\neq 0$ and $rx\in (I:a)$.

\vspace{2mm}
	Case(ii): If $xa$ is nilpotent, we have $(xa)^n=0$ for some $n\in \mathbb N$, so $(xa)^n\in I$ and therefore we get that $x^na^{n-1}\in (I:a)$. If $x^na^{n-1}\neq0$, we take $r=x^{n-1}a^{n-1}$ and therefore $rx\neq0$ and $rx\in(I:a)$. If not, i.e., if $x^na^{n-1}=0$, we have $x^na^{n-2}\in (I:a)$. Again if $x^na^{n-2}\neq0$, we take $r=x^{n-1}a^{n-2}$ and get $rx\neq0$ and $rx\in (I:a)$. But if $x^na^{n-2}=0$, proceeding the same way and also using the fact that $x$ is non-nilpotent, there exists $k\in \{1,2,\dots,n\}$ such that $x^na^{n-k}\neq0$ and $x^na^{n-k+1}=0$. Now taking $r=x^{n-1}a^{n-k}$ we get $rx\neq0$ and $rx\in(I:a)$.
	
	Hence we see that in both cases, whenever $x\in R$ is non-nilpotent, there exists an element $r\in R$ such that $rx\neq0$ and $rx\in (I:a)$.
\end{proof}

\section{Nil-Essential Monomorphisms}
In this section we will investigate the notion of nil-essential ideals from the perspective of monomorphisms. We begin this section by introducing the notion of \emph{nil-essential monomorphism} and subsequently list out their basic properties. 

\begin{definition}\label{217}
	Let $I$ and $J$ be ideals over a ring $R$. A monomorphism $f:I\longrightarrow J$ is {\it nil-essential} if $Im(f) \unlhd_{nil} J$.
\end{definition}

The following proposition characterises nil-essential ideals in terms of nil-essential monomorphisms. 
\begin{proposition}\label{218}
	Let $J$ be an ideal over a ring $R$ and $I$ an ideal of $R$ contained in $J$. Then the following statements are equivalent:
	\begin{enumerate}
		\item $I \unlhd_{nil} J$
		\item The inclusion map $i:I\longrightarrow J$ is a nil-essential monomorphism.
		\item For each ideal $K$ of $R$ and for all $f\in Hom(J,K)$ whenever, $ker(f) \cap I=0$, we have $ker(f)$ is nilpotent.
	\end{enumerate}
\end{proposition}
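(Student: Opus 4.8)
The statement is essentially a transcription of Definition \ref{217} into three equivalent forms, so the plan is to unwind everything down to the defining property of nil-essentiality, the only genuine step being to exhibit, in one direction, a morphism out of $J$ whose kernel is a prescribed witnessing ideal.

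First I would dispose of $(1)\Leftrightarrow(2)$: the inclusion $i\colon I\longrightarrow J$ is a monomorphism with $Im(i)=I$, so by Definition \ref{217} it is nil-essential precisely when $Im(i)=I\unlhd_{nil}J$, which is $(1)$. Nothing else is needed here.

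For $(1)\Rightarrow(3)$ (equivalently $(2)\Rightarrow(3)$), let $K$ be an ideal of $R$ and $f\in Hom(J,K)$ with $ker(f)\cap I=0$. The key point is that $ker(f)$ is itself a left ideal of $R$ contained in $J$: it is an additive subgroup of $J$, and $f(rx)=rf(x)=0$ for all $r\in R$ and $x\in ker(f)$. Since $I\unlhd_{nil}J$ and $ker(f)$ is a left ideal of $R$ inside $J$ that meets $I$ trivially, the definition of nil-essentiality forces $ker(f)$ to be nilpotent; this is exactly $(3)$.

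The one direction carrying content is $(3)\Rightarrow(1)$. Given a left ideal $\mu$ of $R$ with $\mu\subseteq J$ and $I\cap\mu=0$, I must show $\mu$ is nilpotent, and the natural idea is to realise $\mu$ as a kernel: take the canonical surjection $\pi\colon J\longrightarrow J/\mu$, for which $ker(\pi)=\mu$. Then $ker(\pi)\cap I=\mu\cap I=0$, so hypothesis $(3)$ applied to $\pi$ yields that $\mu=ker(\pi)$ is nilpotent. As $\mu$ was an arbitrary left ideal of $R$ contained in $J$ with $I\cap\mu=0$, this says precisely that $I\unlhd_{nil}J$. The only subtlety — and the place where I expect the argument to need the most care — is the codomain here: the clean choice is $K=J/\mu$, so condition $(3)$ is best understood as ranging over homomorphisms out of $J$ into arbitrary $R$-modules (nothing is lost, since in any case every $ker(f)$ is already a left ideal of $R$ contained in $J$). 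With that reading the three conditions plainly coincide, and apart from extracting $\mu$ as a kernel everything is a direct translation through Definition \ref{217}.
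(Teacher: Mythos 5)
Your argument is correct, and in fact the paper states this proposition without any proof at all, so there is nothing to compare against: what you have written is the standard argument, transplanted from the classical characterisation of essential submodules (cf.\ Anderson--Fuller), and it works here verbatim. The equivalence $(1)\Leftrightarrow(2)$ is indeed pure definition-unwinding, and $(1)\Rightarrow(3)$ only needs the observation you make, namely that $\ker(f)$ is an $R$-submodule of $J$, hence a left ideal of $R$ contained in $J$ meeting $I$ trivially. Your one substantive remark is also the right one to make: as literally stated, condition $(3)$ quantifies over ideals $K$ of $R$, and the quotient $J/\mu$ used in $(3)\Rightarrow(1)$ need not be isomorphic to an ideal of $R$, so the implication does not follow from the letter of $(3)$; one must read $K$ as ranging over arbitrary left $R$-modules (which costs nothing in the other direction, since $\ker(f)$ is a left ideal of $R$ inside $J$ regardless of the codomain). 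With that reading your proof is complete; without it, $(3)$ is a formally weaker hypothesis and the equivalence as printed is not justified.
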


\begin{proposition}\label{219}
	Let $f:R\longrightarrow R$ be ring homomorphism. If an ideal $J$ is nil-essential, then $f^{-1}(J)$ is nil-essential. 
\end{proposition}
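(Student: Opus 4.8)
The plan is to unwind the definition directly. Note first that since $f(1)=1$, the preimage $f^{-1}(J)$ is genuinely an ideal of $R$. So let $\mu$ be an ideal of $R$ with $f^{-1}(J)\cap\mu=0$; the goal is to show that $\mu$ is nilpotent.

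I would first record two easy facts. Since $0\in J$ we have $\ker f=f^{-1}(0)\subseteq f^{-1}(J)$, hence $\ker f\cap\mu=0$; in particular $f$ is injective on $\mu$, and for every $n\geq 1$ one has $f(\mu^{n})=\bigl(f(\mu)\bigr)^{n}$ as subsets of $R$. Second, $f(\mu)\cap J=0$: if $y=f(m)$ with $m\in\mu$ and $y\in J$, then $m\in f^{-1}(J)\cap\mu=0$, so $y=0$. The idea is then to apply the nil-essentiality of $J$ to $f(\mu)$ to get $\bigl(f(\mu)\bigr)^{n}=0$ for some $n$, and pull back: $f(\mu^{n})=\bigl(f(\mu)\bigr)^{n}=0$ forces $\mu^{n}\subseteq\ker f$, while $\mu^{n}\subseteq\mu$, so $\mu^{n}\subseteq\ker f\cap\mu=0$, i.e.\ $\mu$ is nilpotent, as required.

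The one subtle step — and the place I expect the real work — is that $f(\mu)$ is literally an ideal of $R$ only when $f$ is onto; in general it is only an ideal of the subring $f(R)$. When $f$ is surjective the argument above is complete and clean. For an arbitrary endomorphism I would reduce to that case by factoring $f$ as $R\twoheadrightarrow R/\ker f\cong f(R)\hookrightarrow R$: the content is to check that the trace $J\cap f(R)$ of a nil-essential ideal of $R$ is nil-essential in the subring $f(R)$, after which $f(\mu)$ is an ideal of $f(R)$ meeting $J\cap f(R)$ trivially and the nilpotence of $f(\mu)$ follows there. Alternatively one can replace $f(\mu)$ throughout by the two-sided ideal $R\,f(\mu)\,R$ it generates in $R$ and verify that this still meets $J$ trivially. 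Either way, managing the passage from $f(\mu)$ to an honest ideal is the crux; everything else is the short computation above.
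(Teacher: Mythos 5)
The paper states this proposition without giving a proof, so your proposal has to stand on its own. The part of your argument that is fully worked out is correct: $f^{-1}(J)$ is an ideal, $\ker f\subseteq f^{-1}(J)$ gives $\ker f\cap\mu=0$, the computation $f(\mu)\cap J=0$ is right, and \emph{when $f$ is surjective} (so that $f(\mu)$ really is an ideal of $R$) the chain $\bigl(f(\mu)\bigr)^{n}=0\Rightarrow f(\mu^{n})=0\Rightarrow\mu^{n}\subseteq\ker f\cap\mu=0$ closes the proof cleanly. You have also correctly located the crux for general $f$: one must somehow replace $f(\mu)$ by an honest ideal of $R$, or work inside the subring $f(R)$.

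The gap is that neither of the two repairs you sketch can be carried out, and in fact the statement is false for non-surjective $f$ without extra hypotheses. Take $R=k[t_1,t_2,t_3,\dots]$, the polynomial ring in countably many variables, let $f$ be the shift $t_i\mapsto t_{i+1}$ (an injective endomorphism with image $k[t_2,t_3,\dots]$), and let $J=(t_1)$. Since $R$ is a domain, $J$ is essential and hence nil-essential. But a polynomial in $t_2,t_3,\dots$ lies in $(t_1)$ only if it is zero, so $f^{-1}(J)=0$, which is not nil-essential ($R$ itself is a non-nilpotent ideal meeting it trivially). In this example $J\cap f(R)=0$ is certainly not nil-essential in $f(R)\cong R$, so the ``trace'' claim you propose to check is false; and with $\mu=R$ the two-sided ideal generated by $f(\mu)$ is all of $R$, which meets $J$ nontrivially, so the second repair fails as well. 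In other words, the step you flag as ``the content'' is not a verification but precisely where the proposition breaks down. A correct write-up should either add the hypothesis that $f$ is surjective (where your argument is complete), or restrict to a setting, such as Noetherian domains where $R$ is algebraic over $f(R)$, in which one can actually prove that every non-nilpotent ideal of $R$ meets $f(R)$ in a non-nilpotent set; as stated, the claim cannot be proved because it is not true.
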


\begin{remark}\label{220}
	\noindent{In the example below we will illustrate that essential and non-essential ideals do not coincide in general.} Consider, 
	$$R= \left\lbrace \left[\begin{array}{lccr}
	a & b & c \\
	0 & a & d \\
	0 & 0 & a
	\end{array}
	\right]: a,b,c,d \in \mathbb Q 
	\right\rbrace,$$then ideals of $R$ are:
	$I_1=R$,
	$I_2=0$,\\
        $I_3= \left\lbrace \left[\begin{array}{lccr}
	0 & 0 & c \\
	0 & 0 & 0 \\
	0 & 0 & 0
	\end{array}
	\right]: c \in \mathbb Q 
	\right\rbrace$, \qquad 
	$I_4= \left\lbrace \left[\begin{array}{lccr}
	0 & b & 0 \\
	0 & 0 & 0 \\
	0 & 0 & 0
	\end{array}
	\right]: b \in \mathbb Q 
	\right\rbrace$,\\
	$I_5= \left\lbrace \left[\begin{array}{lccr}
	0 & b & c \\
	0 & 0 & 0 \\
	0 & 0 & 0
	\end{array}
	\right]: b,c \in \mathbb Q 
	\right\rbrace$,\qquad
	$I_6= \left\lbrace \left[\begin{array}{lccr}
	0 & 0 & c \\
	0 & 0 & d \\
	0 & 0 & 0
	\end{array}
	\right]: c,d \in \mathbb Q 
	\right\rbrace$,\\
	$I_7= \left\lbrace \left[\begin{array}{lccr}
	0 & b & b \\
	0 & 0 & 0 \\
	0 & 0 & 0
	\end{array}
	\right]: b \in \mathbb Q 
	\right\rbrace$,\qquad 
	$I_8= \left\lbrace \left[\begin{array}{lccr}
	0 & b & c \\
	0 & 0 & d \\
	0 & 0 & 0
	\end{array}
	\right]: b,c,d \in \mathbb Q 
	\right\rbrace$,\\
	$I_9= \left\lbrace \left[\begin{array}{lccr}
	0 & b & c \\
	0 & 0 & b \\
	0 & 0 & 0
	\end{array}
	\right]: b,c \in \mathbb Q 
	\right\rbrace.$\\

	Here $I_3$ is nil-essential but not essential, since $I_3\cap \mu=0$ yields either $\mu=0,I_4,$ or $I_7$ and these are all nilpotent ideals. Similarly, we have  $I_4$ is nil-essential but not essential.	
\end{remark}

\section{Localisation of Nil-Essential Ideals}

In this section we present some basic theories on localisation of nil-essential ideals.

\begin{proposition}\label{222}
	Let $R$ be a commutative noetherian ring, $S$ be the set of non-zero divisors of $R$ and $I$ be an ideal of $R$. Then $I$ is nil-essential iff $S^{-1}I$ is nil-essential.
\end{proposition}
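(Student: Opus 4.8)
The plan is to use the characterization of nil-essentiality over a commutative Noetherian ring provided by Lemma~\ref{212}, together with standard facts about the localization $S^{-1}R$ at the set $S$ of non-zero-divisors (the total quotient ring). Throughout I write $\varphi\colon R\to S^{-1}R$ for the canonical map $r\mapsto r/1$; since $S$ consists of non-zero-divisors, $\varphi$ is injective, and I shall freely identify $R$ with its image. The key auxiliary observations I would establish first are: (i) $S^{-1}R$ is again Noetherian (localization of a Noetherian ring); (ii) an element $a/s\in S^{-1}R$ is nilpotent if and only if $a$ is nilpotent in $R$ — this follows because $(a/s)^n = a^n/s^n = 0$ iff $t a^n = 0$ for some $t\in S$ iff $a^n=0$, as $t$ is a non-zero-divisor; and (iii) for $x\in R$, $x$ is a non-zero-divisor-times-something... more precisely, $x/1 = 0$ in $S^{-1}R$ iff $x=0$. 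Fact (ii) is the heart of the matter: it says nilpotency is detected faithfully under this particular localization.

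For the forward direction, assume $I\unlhd_{nil}R$. To show $S^{-1}I\unlhd_{nil}S^{-1}R$ I would verify the Lemma~\ref{212} criterion in $S^{-1}R$: given a non-nilpotent $x/s\in S^{-1}R$, I must produce $r/t\in S^{-1}R$ with $(r/t)(x/s)\in S^{-1}I$ and $(r/t)(x/s)\neq 0$. By (ii), $x$ is non-nilpotent in $R$ (note $x/s$ and $x/1$ differ by the unit $1/s$, so $x/s$ non-nilpotent forces $x/1$, hence $x$, non-nilpotent). Since $I\unlhd_{nil}R$, Lemma~\ref{212} gives $r\in R$ with $rx\in I$, $rx\neq 0$. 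Then $(r/1)(x/s) = rx/s \in S^{-1}I$, and it is non-zero since $rx\neq 0$ and localization at non-zero-divisors is injective. Finally $S^{-1}I$ is non-zero because $I$ is non-zero and $\varphi$ is injective, so the Lemma~\ref{212} hypotheses are met and $S^{-1}I\unlhd_{nil}S^{-1}R$.

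For the converse, assume $S^{-1}I\unlhd_{nil}S^{-1}R$; I want $I\unlhd_{nil}R$, again via Lemma~\ref{212}. Let $x\in R$ be non-nilpotent. Then $x/1\in S^{-1}R$ is non-nilpotent by (ii), so there is $r/t\in S^{-1}R$ with $(r/t)(x/1) = rx/t \in S^{-1}I$ and $rx/t\neq 0$. From $rx/t\in S^{-1}I$ there exist $i\in I$, $s\in S$ with $rx/t = i/s$, hence $u(s\cdot rx - t\cdot i)=0$ for some $u\in S$; since $us\in S$ is a non-zero-divisor, $t i = us\,rx \cdot (\text{adjust})$... cleaner: multiplying through, $usrx \in I$ after clearing — i.e. $(usr)x \in I$ (because $t$ is absorbed: actually $us\cdot rx = ut\cdot i\in I$, so setting $r' := usr$ we get $r'x = u t i \in I$). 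And $r'x\neq 0$: if $r'x=0$ then $usrx=0$, forcing $rx=0$ (as $us$ is a non-zero-divisor), contradicting $rx/t\neq 0$. Thus $r'\in R$ with $r'x\in I$, $r'x\neq 0$. Also $I$ is non-zero (else $S^{-1}I=0$, which cannot be nil-essential in the nonzero ring $S^{-1}R$ unless $S^{-1}R$ is a nil ring, impossible since $1\neq 0$). So Lemma~\ref{212} applies and $I\unlhd_{nil}R$.

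The main obstacle — and the step deserving the most care — is the clearing-of-denominators bookkeeping in the converse, specifically keeping track of which elements lie in $S$ and repeatedly invoking that non-zero-divisors can be cancelled. There is also a minor subtlety if one wants the statement to be non-vacuous: one should note $I\neq 0 \iff S^{-1}I\neq 0$ so that Lemma~\ref{212} (which is stated for non-zero ideals) is legitimately applicable on both sides. Apart from that, everything reduces to fact (ii) on faithful detection of nilpotency, which is where the hypothesis ``$S$ = non-zero-divisors'' is genuinely used; the Noetherian hypothesis enters only through Lemma~\ref{212} and the fact that $S^{-1}R$ is Noetherian.
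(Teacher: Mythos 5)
Your proposal is correct and follows essentially the same route as the paper: both directions are handled by verifying the criterion of Lemma~\ref{212}/\ref{213} in $R$ and in $S^{-1}R$, using that localization at the non-zero-divisors detects nilpotency and vanishing faithfully, and clearing denominators in the converse. Your write-up is in fact slightly more careful than the paper's on two points the paper leaves implicit — that $x$ non-nilpotent implies $x/1$ non-nilpotent requires $S$ to consist of non-zero-divisors, and that $I\neq 0$ iff $S^{-1}I\neq 0$ so the lemma legitimately applies.
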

\begin{proof}
	($\Rightarrow$): Let $\dfrac{x}{s}\in S^{-1}R$ be non-nilpotent, i.e., $\left(\dfrac{x}{s}\right) ^n \neq\dfrac{0}{1}$,  $\forall \; n \in \mathbb N$, therefore $x^n \neq 0$, $\forall \; n \in \mathbb N$, i.e., $x$ is non-nilpotent. By assumption, there exists $\ r\in R$ such that $rx\in I$ and $rx\neq0$. Then $\dfrac{r}{1}\in S^{-1}R$ and $\dfrac{r}{1} \dfrac{x}{s}=\dfrac{rx}{s}\neq \dfrac{0}{1}$ in $S^{-1}I$ and so by Lemma \ref{213}, $S^{-1}I$ is nil essential.\\
	($\Leftarrow$): Suppose $S^{-1}I$ is nil-essential.\\
	Let $x\in R$ be non-nilpotent so $\dfrac{x}{1}$ is non-nilpotent in $S^{-1}R$. Therefore, there exists $\dfrac{r}{s} \in S^{-1}R$ such that, $\dfrac{r}{s}\dfrac{x}{1}\neq \dfrac{0}{1}$ and $\dfrac{r}{s}\dfrac{x}{1}\in S^{-1}I$. Since $\dfrac{rx}{s}\in S^{-1}I$ we have $\dfrac{rx}{s}=\dfrac{a}{t}$ for some $a\in I$ and $t\in S$ therefore $rxtu=asu$ for some $u\in S$ also $(rtu)x=asu\in I ($as $a\in I$) and $(rtu)x\neq 0$ (For if $rtux=0$ then $\dfrac{a}{t}=\dfrac{0}{1}$ which is a contradiction). Therefore $I$ is nil-essential by Lemma \ref{213}.
\end{proof}

\begin{corollary}\label{223}
	Let $R$ be a commutative, noetherian ring, $S$ be a multiplicatively closed subset of $R$ and $I$ be an ideal of $R$. Then $I$ is nil-essential if $S^{-1}I$ is nil-essential.
\end{corollary}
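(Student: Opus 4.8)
The plan is to imitate the $(\Leftarrow)$ direction of Proposition~\ref{222}, reducing everything to Lemma~\ref{212} applied to the ring $S^{-1}R$ (again commutative noetherian). First one records that $S^{-1}I\neq 0$: otherwise $S^{-1}I=0$ while $S^{-1}R\neq 0$, and the zero ideal is never nil-essential in a nonzero ring, since it would force every ideal of $S^{-1}R$ --- including $S^{-1}R$ itself --- to be nilpotent. Granting $S^{-1}I\neq 0$, Lemma~\ref{212} applies in $S^{-1}R$. Now let $x\in R$ be non-nilpotent. The routine step is: if $x/1$ is non-nilpotent in $S^{-1}R$, use nil-essentiality of $S^{-1}I$ to get $\rho/\sigma\in S^{-1}R$ with $0\neq(\rho/\sigma)(x/1)\in S^{-1}I$; write this element as $a/t$ with $a\in I$, $t\in S$; pick $u\in S$ with $u(t\rho x-\sigma a)=0$; then $r:=ut\rho$ satisfies $rx=u\sigma a\in I$ and $rx\neq 0$ (because $0\neq(\rho/\sigma)(x/1)$ forces $v\rho x\neq 0$ for every $v\in S$). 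By Lemma~\ref{212} this would give $I\unlhd_{nil}R$.

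The main obstacle is precisely the hypothesis that $x/1$ be non-nilpotent in $S^{-1}R$. This is where Proposition~\ref{222} used that $S$ consists of non-zero-divisors (so $R\hookrightarrow S^{-1}R$ and $x^{n}/1=0$ forces $x^{n}=0$); for an arbitrary multiplicatively closed $S$ it may fail, since $sx^{n}=0$ for some $s\in S$ makes $x/1$ nilpotent even when $x$ is not. Moreover the obstruction is genuine rather than technical: let $R=k[a,b,c]/(ab,\,cb^{2})$, a commutative noetherian ring, and take $I=(a)$ and $S=\{c^{n}:n\ge 0\}$ (note $0\notin S$). In $R$ one has $(a)\cap(b)=0$ whereas $(b)$ is not nilpotent (since $b^{n}\neq 0$ for every $n$), so by Lemma~\ref{212} $I$ is not nil-essential in $R$; but $S^{-1}R$ is obtained by inverting $c$, and as $c$ becomes a unit the relation $cb^{2}=0$ turns into $b^{2}=0$, so $S^{-1}R\cong k[a,b,c^{\pm1}]/(ab,\,b^{2})$, with nilradical $(b)$, and there multiplying any $\xi\notin(b)$ by $a$ yields a nonzero element of $a\,S^{-1}R$, so by Lemma~\ref{212} $S^{-1}I=a\,S^{-1}R$ \emph{is} nil-essential. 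Hence the corollary as stated needs a hypothesis on $S$ --- for instance that $S$ avoid the zero-divisors of $R$, as in Proposition~\ref{222} --- under which $x$ non-nilpotent does imply $x/1$ non-nilpotent in $S^{-1}R$ and the first paragraph's computation becomes a complete proof.
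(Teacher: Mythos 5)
Your analysis is correct, and the right conclusion is that the corollary is false as stated. Your first paragraph accurately isolates the only point at which the $(\Leftarrow)$ argument of Proposition~\ref{222} uses that $S$ consists of non-zero-divisors, namely the implication ``$x$ non-nilpotent in $R$ implies $x/1$ non-nilpotent in $S^{-1}R$''; the rest (clearing denominators with some $u\in S$, and the observation that $(\rho/\sigma)(x/1)\neq 0$ forces $v\rho x\neq 0$ for every $v\in S$) works for an arbitrary multiplicatively closed $S$. Your counterexample also checks out: in $R=k[a,b,c]/(ab,cb^{2})$ one has $(a)\cap(b)=0$ (if $ag\in(b)+(ab,cb^{2})\subseteq (b)$ then $b\mid g$, so $ag\in(ab)$), while $b^{n}\notin(ab,cb^{2})$ for all $n$ (specialise $a=c=0$), so $(a)$ is not nil-essential in $R$; on the other hand $S^{-1}R\cong k[a,b,c^{\pm1}]/(ab,b^{2})$ has $k$-basis $\{a^{i}c^{k}\}\cup\{bc^{k}\}$, its nilradical is exactly $(b)$, and multiplying any non-nilpotent element $f_{0}(a,c)+bf_{1}(c)$ (so $f_{0}\neq0$) by $a$ gives $af_{0}(a,c)\neq0$ in $a\,S^{-1}R$, whence $S^{-1}I\unlhd_{nil}S^{-1}R$ by Lemma~\ref{212}. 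The paper supplies no proof of this corollary, and the sentence immediately following it (``the above corollary does not hold in general'') can only be read as conceding exactly the point you have demonstrated.

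Two further remarks. First, there is an even smaller counterexample: take $R=\mathbb{Z}/6\mathbb{Z}$, $I=2R$ and $S=\{1,2,4\}$. Then $S^{-1}R\cong\mathbb{Z}/3\mathbb{Z}$ and $S^{-1}I=S^{-1}R$, which is trivially nil-essential, yet $I\cap 3R=0$ and $3R$ is generated by the idempotent $3$, hence is not nilpotent, so $I$ is not nil-essential in $R$. Second, your proposed repair --- requiring $S$ to avoid the zero-divisors of $R$ --- does yield a true statement, but at that point the argument is word-for-word the $(\Leftarrow)$ half of the proof of Proposition~\ref{222}, which only ever uses that the elements of $S$ are non-zero-divisors rather than that $S$ is the full set of them; so the corrected ``corollary'' is a mild generalisation of that direction of the proposition rather than a new result.
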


It is to be noted that the above corollary does not hold in general. Following this we will document an example of a ring whose localisations of nil-essential ideals are again nil-essential.

\begin{example}\label{224}
	Let $I$ be an ideal of a ring ${\mathbb Z}_{p^n}$ where $p$ is prime and $n\in N$. Then if $S$ is a multiplicatively closed subset of ${\mathbb Z}_{p^n}$ we have $I$ is essential iff $S^{-1}I$ is nil-essential.
\end{example}

\section*{acknowledgment}
The second author would like to thank Mr. Rishabh Goswami (North-Eastern Hill University, Shillong) for carefully reviewing the manuscript and bring it to the present form.

\section*{Declarations}
\subsection*{Ethical approval}
Not applicable

%\subsection*{Word count} 2030
\subsection*{Funding}
Not applicable

\subsection*{Disclosure} The authors report that there are no competing interests to declare

\subsection*{Authors' contribution}
All authors have contributed equally to all sections.

\subsection*{Data availability statement}
Not applicable

% \vspace{5mm}

%**************************************************************
% \begin{thebibliography}{99}

% \bibitem{john}
% R.E. Johnson: {\em Structure Theory of Faithful Rings}, Proc. Amer. Math. Soc. 2(1951), 891-895.

% \bibitem{eck}
% B. Eckmann and A. Schopf, Archiv der Math 4(1953), 75-78

% \bibitem{ander} 
% F. W. Anderson and K.R. Fuller : {\em Rings and Categories of Modules (Graduate Text in Mathematics 13)},  Springer -Verlag, Berlin Heldelberg-New York, (1974).

% \bibitem{atiyah} 
% M. F. Atiyah and I. G. Macdonald: {\em Introduction to Commutative Algebras}, Addison Wesley, (1972).

% \bibitem{goodearl}
% K. R. Goodearl, R. B. Warfield, Jr: {\em An Introduction to Non-commutative Noetherian Rings Second Edition}, (1989).

% \end{thebibliography}

\printbibliography
% \label{lastpage}
\end{document}